\date{\today}
\newtheorem{theorem}{Theorem}
\newtheorem{proposition}[theorem]{Proposition}
\newtheorem{corollary}[theorem]{Corollary}
\newtheorem{lemma}[theorem]{Lemma}
\theoremstyle{definition}
\newtheorem{example}[theorem]{Example}
\newtheorem{definition}[theorem]{Definition}
\begin{document}

\title[On locally compact shift-continuous topologies]{On locally compact shift-continuous topologies on semigroups $\mathscr{C}_{+}(a,b)$ and $\mathscr{C}_{-}(a,b)$ with adjoined zero}

\author{Oleg~Gutik}
\address{Faculty of Mathematics, National University of Lviv,
Universytetska 1, Lviv, 79000, Ukraine}
\email{oleg.gutik@lnu.edu.ua}

\keywords{semitopological semigroup, topological semigroup, left topological semigroup, locally compact}

\subjclass[2020]{22A15, 54D45, 54H10}

\begin{abstract}
Let $\mathscr{C}_{+}(p,q)^0$ and $\mathscr{C}_{-}(p,q)^0$ be the semigroups $\mathscr{C}_{+}(a,b)$ and $\mathscr{C}_{-}(a,b)$ with the adjoined zero.
We show that the semigroups $\mathscr{C}_{+}(p,q)^0$ and $\mathscr{C}_{-}(p,q)^0$ admit continuum many different Hausdorff locally compact shift-continuous topologies  up to topological isomorphism.
\end{abstract}

\maketitle

In this paper we shall follow the terminology of \cite{Carruth-Hildebrant-Koch=1983, Clifford-Preston=1961,  Engelking=1989, Ruppert=1984}.

By $\omega$ we denote the set of all non-negative integers. Throughout these notes we always assume that all topological spaces involved are Hausdorff.

\begin{definition}[\cite{Carruth-Hildebrant-Koch=1983, Ruppert=1984}]
Let $S$ be a non-void topological space which is provided with an associative multiplication (a semigroup operation) $\mu\colon S\times S\to S$, $(x,y)\mapsto \mu(x,y)=xy$. Then the pair $(S,\mu)$ is called
\begin{itemize}
  \item[$(i)$] a \emph{right topological} (\emph{left topological}) \emph{semigroup} if all interior left (right) shifts $\lambda_s\colon S\to S$, $x\mapsto sx$ ($\rho_s\colon S\to S$, $x\mapsto xs$), are continuous maps, $s\in S$;
  \item[$(ii)$] a \emph{semitopological semigroup} if the map $\mu$ is separately continuous;
  \item[$(iii)$] a \emph{topological semigroup} if the map $\mu$ is jointly continuous.
\end{itemize}
We usually omit the reference to $\mu$ and write simply $S$ instead of $(S,\mu)$. It goes without saying that every topological semigroup is
also semitopological and every semitopological semigroup is both a right and left topological semigroup.
\end{definition}

A topology $\tau$ on a semigroup $S$ is called:
\begin{itemize}
  \item a \emph{semigroup} topology if $(S,\tau)$ is a topological semigroup;
  \item a \emph{shift-continuous} topology if $(S,\tau)$ is a semitopological semigroup;
  \item an \emph{left-continuous} (\emph{right-continuous}) topology if $(S,\tau)$ is a left (right) topological semigroup.
\end{itemize}

The bicyclic monoid ${\mathscr{C}}(a,b)$ is the semigroup with the identity $1$ generated by two elements $a$ and $b$ subjected only to the condition $ab=1$. The semigroup operation on ${\mathscr{C}}(a,b)$ is determined as
follows:
\begin{equation*}\label{eq-1}
    b^ka^l\cdot b^ma^n=
    \left\{
      \begin{array}{ll}
        b^{k-l+m}a^n, & \hbox{if~} l<m;\\
        b^ka^n,       & \hbox{if~} l=m;\\
        b^ka^{l-m+n}, & \hbox{if~} l>m.
      \end{array}
    \right.
\end{equation*}

In \cite{Makanjuola-Umar=1997} Makanjuola and Umar study algebraic property of the following anti-isomorphic subsemigroups
\begin{equation*}
  \mathscr{C}_{+}(p,q)=\left\{q^ip^j\in\mathscr{C}(p,q)\colon i\leqslant j\right\}
  \quad \hbox{and} \quad
  \mathscr{C}_{-}(p,q)=\left\{q^ip^j\in\mathscr{C}(p,q)\colon i\geqslant j\right\},
\end{equation*}
of the bicyclic monoid. In the paper \cite{Gutik=2023} we prove that every Hausdorff left-continuous (right-continuous) topology on the monoid $\mathscr{C}_{+}(a,b)$  ($\mathscr{C}_{-}(a,b)$) is discrete and show that there exists a compact Hausdorff topological monoid $S$ which contains $\mathscr{C}_{+}(a,b)$  ($\mathscr{C}_{-}(a,b)$) as a submonoid. Also, in \cite{Gutik=2023} we constructed a non-discrete right-continuous (left-continuous) topology $\tau_p^+$ ($\tau_p^-$) on the semigroup $\mathscr{C}_{+}(a,b)$ ($\mathscr{C}_{-}(a,b)$) which is not left-continuous (right-continuous).

Later by $\mathscr{C}_{+}(p,q)^0$ and $\mathscr{C}_{-}(p,q)^0$  we denote the semigroups $\mathscr{C}_{+}(a,b)$ and $\mathscr{C}_{-}(a,b)$ with the adjoined zero.

In \cite{Gutik=2015} it is proved that every Hausdorff locally compact shift-continuous topology on the bicyclic monoid with adjoined zero is either compact  or discrete. This result was extended by Bardyla onto the $p$-polycyclic monoid \cite{Bardyla=2016} and graph inverse semigroups \cite{Bardyla=2018}, and by Mokrytskyi onto the monoid of order isomorphisms between principal filters of $\mathbb{N}^n$ with adjoined zero \cite{Mokrytskyi=2019}.
In \cite{Gutik-Khylynskyi=2022} the results of paper \cite{Gutik=2015} onto the monoid $\mathbf{I}\mathbb{N}_{\infty}$ of all partial cofinite isometries of positive integers with adjoined zero are extended. In \cite{Gutik-Mykhalenych=2023} the similar dichotomy was proved for so called bicyclic extensions $\boldsymbol{B}_{\omega}^{\mathscr{F}}$  when a family $\mathscr{F}$ consists of inductive non-empty subsets of~$\omega$.
Algebraic properties on a group $G$ such that if the discrete group $G$ has these properties, then every locally compact shift continuous topology on
$G$ with adjoined zero is either compact or discrete studied in \cite{Maksymyk=2019}. The above results are extended in \cite{Gutik-KhylynskyiM=2024} to the bicyclic extension $\boldsymbol{B}_{[0,\infty)}$ of the additive group of reals with adjoined zero (see \cite{Gutik-Pagon-Pavlyk=2011}) in the cases when on the semigroup $\boldsymbol{B}_{[0,\infty)}$ the usual topology, the discrete topology or  the topology determined by the natural partial order is defined.
Also, in \cite{Gutik-Maksymyk=2019} it is proved that the extended bicyclic semigroup $\mathscr{C}_\mathscr{\mathbb{Z}}^0$ with adjoined zero admits distinct $\mathfrak{c}$-many  shift-continuous topologies, however every Hausdorff locally compact semigroup topology on $\mathscr{C}_\mathscr{\mathbb{Z}}^0$ is discrete. In \cite{Bardyla=2023} Bardyla proved that a Hausdorff locally compact semitopological semigroup McAlister Semigroup $\mathcal{M}_1$ is either compact or discrete. However, this dichotomy does not hold for the McAlister Semigroup $\mathcal{M}_2$ and moreover, $\mathcal{M}_2$ admits continuum many different Hausdorff locally compact inverse semigroup topologies \cite{Bardyla=2023}.

In this paper we show that the semigroups $\mathscr{C}_{+}(p,q)^0$ and $\mathscr{C}_{-}(p,q)^0$ admit continuum many different Hausdorff locally compact shift-continuous topologies  up to topological isomorphism.

\begin{lemma}\label{lemma-3}
Every locally compact Hausdorff shift-continuous topology $\tau$ on the additive semigroup of non-negative integers $(\omega, +)$ is discrete.
\end{lemma}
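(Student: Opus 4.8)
The plan is to combine the purely algebraic fact that every translation is continuous with the Baire-category consequence of local compactness. Since $(\omega,+)$ is commutative, separate continuity of the addition means that for each $k\in\omega$ the shift $\rho_k\colon\omega\to\omega$, $x\mapsto x+k$, is a continuous map. I would write $D$ for the set of isolated points of $(\omega,\tau)$ and $L=\omega\setminus D$ for the set of non-isolated points, and aim to prove $D=\omega$.

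First I would show that $D$ is an initial segment of $(\omega,\leqslant)$. Suppose $n_0\in D$, so that $\{n_0\}$ is open. For each $k\in\{0,1,\dots,n_0\}$ the preimage $\rho_k^{-1}(\{n_0\})=\{x\in\omega\colon x+k=n_0\}=\{n_0-k\}$ is open by continuity of $\rho_k$, whence $n_0-k\in D$. Letting $k$ run over $0,1,\dots,n_0$ gives $\{0,1,\dots,n_0\}\subseteq D$. Thus $D$ is downward closed, and therefore either $D=\omega$ or $D$ is a finite initial segment $\{0,1,\dots,N\}$ (possibly empty).

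The role of local compactness enters through Baire category. A locally compact Hausdorff space is a Baire space, and in a countable Baire space the isolated points are dense; concretely, every nonempty open subset $U$ of $(\omega,\tau)$ contains a point isolated in $U$, hence isolated in $\omega$ because $U$ is open. In other words, $D$ is dense. Now suppose, for contradiction, that $\tau$ is not discrete, i.e. $D\neq\omega$. By the previous step $D=\{0,1,\dots,N\}$ is finite, hence closed (singletons are closed in a Hausdorff space), so $L=\omega\setminus D$ is a nonempty open set disjoint from $D$. This contradicts the density of $D$, and therefore $D=\omega$; that is, $\tau$ is discrete.

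The only delicate points I foresee are bookkeeping ones: verifying that $\rho_k$ is genuinely continuous (which is exactly the shift-continuity hypothesis, using commutativity to identify left and right shifts) and that a proper downward-closed subset of $\omega$ must be finite and hence closed. \textbf{The substantive step}, and the one I expect to carry the whole argument, is the Baire-category observation that a nonempty open subset of a countable locally compact Hausdorff space must contain an isolated point; once this is in hand, the rigidity of the translations on $\omega$ forces the rest.
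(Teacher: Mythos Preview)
Your argument is correct and follows essentially the same approach as the paper: both use the Baire property of a locally compact Hausdorff space to produce isolated points in nonempty open subsets of $\omega$, and then use continuity of the right shifts $\rho_k$ to pull isolatedness down to every smaller integer. The only difference is organizational---the paper fixes an arbitrary $n_0$, finds an isolated $n_1>n_0$ in the open set $\omega\setminus\{0,\dots,n_0\}$, and pulls back via $\rho_{n_1-n_0}$, whereas you first show the isolated set $D$ is an initial segment and then invoke density of $D$; the substance is identical.
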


\begin{proof}
Fix any $n_0\in\omega$. The Hausdorffness of the space $(\omega,\tau)$ implies that $n_0^{\downharpoonleft}=\{k\in\omega\colon k\leqslant n\}$ is a closed subset of $(\omega,\tau)$. Then $\omega\setminus n_0^{\downharpoonleft}$ is an open subset of $(\omega,\tau)$, and by Corollary~3.3.10 of \cite{Engelking=1989}, $\omega\setminus n_0^{\downharpoonleft}$ is locally compact, and hence, Baire. By Proposition~1.30 of \cite{Haworth-McCoy=1977} the space $\omega\setminus n_0^{\downharpoonleft}$ contains an isolated point $n_1$, which is isolated in $(\omega,\tau)$ because $\omega\setminus n_0^{\downharpoonleft}$ is an open subset of $(\omega,\tau)$. This and the condition $n_0<n_1$ imply that $n_0$ is an isolated point in $(\omega,\tau)$, because $n_0$ is the full preimage of $n_1$ under the continuous right shift $\rho_{n_1-n_0}\colon (\omega, +,\tau)\to (\omega, +,\tau)$, $i\mapsto i+(n_1-n_0)$. This completes the proof of the lemma.
\end{proof}

Later by $(\omega, +)^0$ we denote the additive semigroup of non-negative integers $(\omega, +)$ with adjoined zero. Without loss of generality we may assume that $(\omega, +)^0=\omega\cup\{\infty\}$ with the extended semigroup operation $n+\infty=\infty+n=\infty+\infty=\infty$ for all $n\in\omega$, i.e., $\infty$ is the zero of $(\omega, +)^0$.

\begin{proposition}\label{proposition-4}
Every Hausdorff locally compact shift-continuous topology on the semigroup $(\omega, +)^0$ is either compact or discrete.
\end{proposition}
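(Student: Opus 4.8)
The plan is to reduce the statement to a dichotomy governed by the single point $\infty$, the adjoined zero, and then to dispatch the non-isolated case with a shift argument. Throughout let $\tau$ be a Hausdorff locally compact shift-continuous topology on $(\omega,+)^0=\omega\cup\{\infty\}$. First I would observe that, since $\tau$ is Hausdorff, the singleton $\{\infty\}$ is closed, so $\omega=(\omega,+)^0\setminus\{\infty\}$ is an open subspace. An open subspace of a locally compact Hausdorff space is locally compact, and the restrictions to $\omega$ of the shifts of $(\omega,+)^0$ are exactly the shifts of $(\omega,+)$, since $\omega$ is invariant under them ($s+x\in\omega$ whenever $s,x\in\omega$). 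Hence $(\omega,+)$ carries a Hausdorff locally compact shift-continuous topology, and Lemma~\ref{lemma-3} gives that $\omega$ is discrete. As $\omega$ is open in $(\omega,+)^0$, every point of $\omega$ is isolated in $(\omega,+)^0$, so $\infty$ is the only possible non-isolated point; if $\infty$ is isolated the whole space is discrete, which is the first alternative.

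It remains to prove that if $\infty$ is \emph{not} isolated then $(\omega,+)^0$ is compact. I would first record a characterization: since every point of $\omega$ is isolated, a closed subset of $(\omega,+)^0$ missing $\infty$ is discrete, and if it is compact it must be finite. Consequently, for \emph{any} open neighbourhood $W$ of $\infty$ and any compact set $K$, the set $K\setminus W$ is finite (it is closed in $K$, hence compact, and lies in $\omega$). It follows that $(\omega,+)^0$ is compact precisely when every open neighbourhood of $\infty$ is cofinite: one direction is the finiteness just noted, and the other follows by covering, point by point, the finite complement of a single cofinite neighbourhood. So the goal becomes to show that every neighbourhood of $\infty$ has finite complement in $\omega$.

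The heart of the argument, and the step I expect to be the main obstacle, is to show that a compact neighbourhood $K$ of $\infty$, which exists by local compactness, already satisfies that $A:=K\cap\omega$ is cofinite. Because $\infty$ is non-isolated, every neighbourhood of $\infty$ is infinite, so $A$ is infinite; write $A=\{a_1<a_2<\cdots\}$. Let $V$ be the interior of $K$. Applying continuity of the shift $x\mapsto x+1$ at $\infty$ (the semigroup $(\omega,+)^0$ is commutative, so left and right shifts coincide), the preimage of $V$ is an open neighbourhood of $\infty$, and by the finiteness remark above it omits only finitely many points of $K$. Hence $a+1\in V\subseteq K$, and therefore $a+1\in A$, for all but finitely many $a\in A$. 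But $a_i+1\in A$ together with $a_{i+1}>a_i$ forces $a_{i+1}=a_i+1$ for all large $i$, so $A$ contains a tail $\{N,N+1,N+2,\dots\}$ and is cofinite; that is, $\omega\setminus K$ is finite.

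Finally I would combine the two finiteness facts: for an arbitrary open neighbourhood $W$ of $\infty$ we have $\omega\setminus W=(\omega\setminus K)\cup(K\setminus W)$, a union of two finite sets, so $W$ is cofinite. By the characterization above, $(\omega,+)^0$ is then compact, which is the second alternative. The delicate point throughout is the passage from the \emph{local} cofiniteness available inside the single compact set $K$ to \emph{global} cofiniteness on all of $\omega$, and this is exactly what the shift-continuity of $x\mapsto x+1$ supplies.
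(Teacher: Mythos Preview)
Your proof is correct and follows essentially the same approach as the paper's: both invoke Lemma~\ref{lemma-3} to make $\omega$ discrete, then exploit continuity of the shift $x\mapsto x+1$ together with the fact that $K\setminus W$ is finite whenever $K$ is compact and $W$ is an open neighbourhood of $\infty$, in order to force a compact neighbourhood of $\infty$ to be cofinite. The only difference is presentational---the paper argues the cofiniteness step by contradiction (assuming $\omega\setminus U$ infinite and exhibiting infinitely many points of $U\setminus V$), whereas you argue directly that $K\cap\omega$ is eventually closed under successor.
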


\begin{proof}
Let $\tau_{\mathrm{lc}}$ be an arbitrary non-discrete Hausdorff locally compact shift-continuous topology on the semigroup $(\omega, +)^0$.  The Hausdorffness of $((\omega, +)^0,\tau_{\mathrm{lc}})$ implies that $\omega$ is an open subset of $((\omega, +)^0,\tau_{\mathrm{lc}})$. Then by Corollary~3.3.10 of \cite{Engelking=1989}, $\omega$ is locally compact, and by Lemma~\ref{lemma-3} is a discrete subspace of $((\omega, +)^0,\tau_{\mathrm{lc}})$.

Since all point from $\omega$ are open-and-closed subsets of the locally compact space $((\omega, +)^0,\tau_{\mathrm{lc}})$, there exists a base $\mathscr{B}_{\tau_{\mathrm{lc}}}(\infty)$ of the topology $\tau_{\mathrm{lc}}$ at the point $\infty$ which consists of compact-and-open subsets of $((\omega, +)^0,\tau_{\mathrm{lc}})$. Hence, for any $U,V\in \mathscr{B}_{\tau_{\mathrm{lc}}}(\infty)$ the set $U\setminus V$ is finite.

We state that for any $U\in \mathscr{B}_{\tau_{\mathrm{lc}}}(\infty)$ the set $\omega\setminus U$ is finite. Suppose to the contrary that there exists $U\in \mathscr{B}_{\tau_{\mathrm{lc}}}(\infty)$ the set $\omega\setminus U$ is infinite. The separate continuity of the semigroup operation in $((\omega, +)^0,\tau_{\mathrm{lc}})$ implies that there exists $V\in \mathscr{B}_{\tau_{\mathrm{lc}}}(\infty)$ such that $V\subseteq U$ and $1+V\subseteq U$. Since $\omega\setminus U$ is infinite, there exists a sequence $\{x_n\}_{n\in\omega}\subseteq U$ such that $1+x_i\neq U$ for any $i\in\omega$. This implies that $x_i\neq V$ for any $i\in\omega$, and hence, the set $U\setminus V$ is infinite, a contradiction. The obtained contradiction implies that $\tau_{\mathrm{lc}}$ is a compact topology on $(\omega, +)^0$.
\end{proof}

Later by $\tau_{\mathrm{lc}}$ we denote a Hausdorff locally compact shift-continuous topology on the semigroup $\mathscr{C}_{+}(p,q)^0$.

Since every Hausdorff shift-continuous topology on the semigroup $\mathscr{C}_{+}(p,q)$ is discrete (see \cite[Theorem~6]{Gutik=2023}), the following statements holds.

\begin{lemma}\label{lemma-5}
If $U$ and $V$ are any compact-and-open neighbourhoods of the zero in $(\mathscr{C}_{+}(p,q)^0,\tau_{\mathrm{lc}})$, then the set $U\setminus V$ is finite.
\end{lemma}

For any $i\in \omega$ we denote
\begin{equation*}
  \mathscr{C}_{+}^i(p,q)=\{b^ia^{i+s}\in \mathscr{C}_{+}(p,q)\colon s\in\omega\}.
\end{equation*}
The semigroup operation of $\mathscr{C}_{+}(p,q)$ implies that $\mathscr{C}_{+}^i(p,q)$ is a subsemigroup of $\mathscr{C}_{+}(p,q)$, and moreover, $\mathscr{C}_{+}^i(p,q)$ is isomorphic to the additive semigroup of non-negative integers $(\omega, +)$ for any $i\in\omega$ \cite{Gutik=2023}.

\begin{lemma}\label{lemma-6}
For any compact-and-open neighbourhood $U$ of the zero in $(\mathscr{C}_{+}(p,q)^0,\tau_{\mathrm{lc}})$ there exists $i\in\omega$ such that the set $U\cap \mathscr{C}_{+}^i(p,q)$ is infinite.
\end{lemma}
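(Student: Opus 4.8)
The plan is to argue by contradiction: I assume that $U\cap\mathscr{C}_{+}^i(p,q)$ is finite for \emph{every} $i\in\omega$ and then produce infinitely many points lying in the difference of two compact-and-open neighbourhoods of the zero, which contradicts Lemma~\ref{lemma-5}. The combinatorial backbone is the decomposition of $\mathscr{C}_{+}(p,q)$ into the pairwise disjoint ``columns'' $\mathscr{C}_{+}^i(p,q)=\{b^ia^{i+s}\colon s\in\omega\}$, and the technical tool is the right shift $\rho_a$, which acts on each column by $b^ia^{i+s}\mapsto b^ia^{i+s+1}$, i.e. it increases the ``height'' $s$ by one inside the same column. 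Note the lemma is only meaningful when the zero $0$ is not isolated (otherwise $\{0\}$ is itself a compact-and-open neighbourhood meeting no column), so I may assume $\tau_{\mathrm{lc}}$ is non-discrete.

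First I would record that $U$ is infinite. Since every Hausdorff shift-continuous topology on $\mathscr{C}_{+}(p,q)$ is discrete (see \cite[Theorem~6]{Gutik=2023}), every point of $\mathscr{C}_{+}(p,q)$ is isolated in $(\mathscr{C}_{+}(p,q)^0,\tau_{\mathrm{lc}})$, so $0$ is the unique non-isolated point; hence every neighbourhood of $0$, and in particular $U$, is infinite. Consequently, under the assumption that each $U\cap\mathscr{C}_{+}^i(p,q)$ is finite, the set $U$ must meet infinitely many distinct columns $\mathscr{C}_{+}^i(p,q)$.

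Next I would bring in the shift. Arguing exactly as in the proof of Proposition~\ref{proposition-4}, the zero has a base of compact-and-open neighbourhoods. Because $\tau_{\mathrm{lc}}$ is shift-continuous and $\rho_a$ fixes the zero, $\rho_a^{-1}(U)$ is an open neighbourhood of $0$, so I can choose a compact-and-open neighbourhood $V$ of $0$ with $V\subseteq U\cap\rho_a^{-1}(U)$; then $V\subseteq U$ and $\rho_a(V)\subseteq U$, and by Lemma~\ref{lemma-5} the set $U\setminus V$ is finite. Now for each column that $U$ meets, the finite non-empty set $U\cap\mathscr{C}_{+}^i(p,q)$ has a ``top'' element $x_i=b^ia^{i+s_i}$ with $s_i$ maximal. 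Its image $\rho_a(x_i)=b^ia^{i+s_i+1}$ lies in the same column with strictly larger height, so $\rho_a(x_i)\notin U$ by maximality of $s_i$; hence $x_i\notin\rho_a^{-1}(U)\supseteq V$, that is $x_i\in U\setminus V$.

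The contradiction then follows: the top elements $x_i$ sit in different columns and are therefore pairwise distinct, and since $U$ meets infinitely many columns there are infinitely many of them, all lying in $U\setminus V$ — contradicting the finiteness of $U\setminus V$. Hence $U\cap\mathscr{C}_{+}^i(p,q)$ is infinite for some $i\in\omega$. The step I expect to demand the most care is the reduction to ``$U$ meets infinitely many columns'', since it depends on $U$ being infinite and hence on $0$ being non-isolated; once that is in place, the remaining ``top-of-the-column escapes under $\rho_a$'' observation combined with Lemma~\ref{lemma-5} is routine.
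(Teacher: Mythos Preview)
Your proof is correct and follows essentially the same route as the paper's own argument: assume for contradiction that every column meets $U$ finitely, use shift-continuity of right multiplication by $a$ to obtain a compact-and-open $V\subseteq U$ with $V\cdot a\subseteq U$, then observe that the ``top'' element of each non-empty $U\cap\mathscr{C}_{+}^{i}(p,q)$ must lie in $U\setminus V$, contradicting Lemma~\ref{lemma-5}. Your write-up is in fact more careful than the paper's on two points: you justify explicitly why $U$ must meet infinitely many columns (via infiniteness of $U$), and you flag that the statement tacitly requires $0$ to be non-isolated, an assumption the paper's proof uses without comment.
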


\begin{proof}
Suppose to the contrary that there exists a compact-and-open neighbourhood $U$ of the zero in $(\mathscr{C}_{+}(p,q)^0,\tau_{\mathrm{lc}})$ such that $\left|U\cap \mathscr{C}_{+}^i(p,q)\right|<\infty$ for any $i\in\omega$. Then there exists a sequence $\{i_j\}_{j\in\omega}\subseteq\omega$ such that $U\cap \mathscr{C}_{+}^{i_j}(p,q)\neq\varnothing$ for any $j\in\omega$. The separate continuity of the semigroup operation in $(\mathscr{C}_{+}(p,q)^0,\tau_{\mathrm{lc}})$ and local compactness of $\tau_{\mathrm{lc}}$ imply that there exists a compact-and-open neighbourhood $V$ of zero in $(\mathscr{C}_{+}(p,q)^0,\tau_{\mathrm{lc}})$ such that $V\subseteq U$ and $V\cdot a\subseteq U$. By the definition of the semigroup operation in $\mathscr{C}_{+}(p,q)$ we get that $\mathscr{C}_{+}^i(p,q)\cdot a\subseteq \mathscr{C}_{+}^i(p,q)$ for all $i\in\omega$. Since for any $j\in\omega$ the set $U\cap \mathscr{C}_{+}^{i_j}(p,q)$ is non-empty and finite, there exists maximal non-negative integer $s_j$ such that $b^{i_j}a^{i_j+s_j}\in U$ but $b^{i_j}a^{i_j+s_j}\notin V$. This implies that the set $U\setminus V$ is infinite, which contradicts Lemma~\ref{lemma-5}. The obtained contradiction implies the statement of the lemma.
\end{proof}

\begin{lemma}\label{lemma-7}
For any compact-and-open neighbourhood $U$ of the zero in $(\mathscr{C}_{+}(p,q)^0,\tau_{\mathrm{lc}})$ there exists $i_0\in\omega$ such that $\mathscr{C}_{+}^{i_0}(p,q)\cup\{0\}$ is a compact subset of $(\mathscr{C}_{+}(p,q)^0,\tau_{\mathrm{lc}})$.
\end{lemma}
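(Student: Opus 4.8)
The plan is to reduce the statement to Proposition~\ref{proposition-4} by realising the column $\mathscr{C}_{+}^{i_0}(p,q)\cup\{0\}$ as a copy of the semigroup $(\omega,+)^0$ and then showing, for a suitably chosen $i_0$, that this copy fails to be discrete, so that the compact--or--discrete dichotomy forces it to be compact.

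First I would record that every point of $\mathscr{C}_{+}(p,q)$ is isolated in $(\mathscr{C}_{+}(p,q)^0,\tau_{\mathrm{lc}})$. Indeed, the subspace topology on the subsemigroup $\mathscr{C}_{+}(p,q)$ is Hausdorff and shift-continuous, hence discrete by \cite[Theorem~6]{Gutik=2023}; combining this discreteness with the Hausdorff separation of any $x\in\mathscr{C}_{+}(p,q)$ from $0$ produces an open set equal to $\{x\}$. Consequently, for the index $i_0$ supplied by Lemma~\ref{lemma-6} (so that $U\cap\mathscr{C}_{+}^{i_0}(p,q)$ is infinite), the set $Y=\mathscr{C}_{+}^{i_0}(p,q)\cup\{0\}$ is closed in $(\mathscr{C}_{+}(p,q)^0,\tau_{\mathrm{lc}})$, because its complement $\mathscr{C}_{+}(p,q)\setminus\mathscr{C}_{+}^{i_0}(p,q)$ is a union of isolated points and hence open. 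A closed subspace of a locally compact space is locally compact, and $Y$ is a subsemigroup isomorphic to $(\omega,+)^0$ (with $0$ playing the role of the adjoined zero), so the subspace topology on $Y$ is a Hausdorff locally compact shift-continuous topology on $(\omega,+)^0$. By Proposition~\ref{proposition-4}, $Y$ is therefore either compact or discrete.

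It remains to exclude the discrete alternative for this particular $i_0$, and this is the crux of the argument. Here I would use that $U$ is compact and $Y$ is closed, whence $Z:=U\cap Y=\bigl(U\cap\mathscr{C}_{+}^{i_0}(p,q)\bigr)\cup\{0\}$ is compact. Since $U\cap\mathscr{C}_{+}^{i_0}(p,q)$ is an infinite set of points that are isolated in $Z$, limit-point compactness of $Z$ forces this infinite set to have an accumulation point inside $Z$; an isolated point can never be an accumulation point, and $0$ is the only non-isolated point available, so $0$ must be that accumulation point. Thus $0$ is not isolated in $Z$, and therefore not isolated in $Y$, which rules out the discrete case. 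Hence $Y=\mathscr{C}_{+}^{i_0}(p,q)\cup\{0\}$ is compact, as required.

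The main obstacle is precisely this last non-triviality check: converting the purely local information of Lemma~\ref{lemma-6} (one compact-and-open neighbourhood meets one column in an infinite set) into the global conclusion that the \emph{whole} column together with the zero is compact. The mechanism that makes this possible is the compact-or-discrete dichotomy of Proposition~\ref{proposition-4}: once non-discreteness at $0$ is secured, compactness of the entire column follows automatically, so no separate finiteness estimate on $\mathscr{C}_{+}^{i_0}(p,q)\setminus U$ is needed.
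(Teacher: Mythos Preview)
Your argument is correct and follows the same route as the paper: invoke Lemma~\ref{lemma-6} to pick $i_0$, observe that $\mathscr{C}_{+}^{i_0}(p,q)\cup\{0\}$ is closed (hence locally compact) and isomorphic to $(\omega,+)^0$, and apply Proposition~\ref{proposition-4}. The only difference is that you spell out explicitly why the discrete alternative of Proposition~\ref{proposition-4} is impossible (via the compactness of $U\cap Y$ together with the infiniteness of $U\cap\mathscr{C}_{+}^{i_0}(p,q)$), whereas the paper leaves this step implicit; your version is in fact the more complete one.
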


\begin{proof}
By Lemma~\ref{lemma-6} for any compact-and-open neighbourhood $U$ of the zero in $(\mathscr{C}_{+}(p,q)^0,\tau_{\mathrm{lc}})$ there exists $i_0\in\omega$ such that the set $U\cap \mathscr{C}_{+}^{i_0}(p,q)$ is infinite. Since $\mathscr{C}_{+}(p,q)$ is a discrete subspace of $(\mathscr{C}_{+}(p,q)^0,\tau_{\mathrm{lc}})$,  $\mathscr{C}_{+}^{i_0}(p,q)\cup\{0\}$ is a closed subset of $(\mathscr{C}_{+}(p,q)^0,\tau_{\mathrm{lc}})$. By Corollary~3.3.10 of \cite{Engelking=1989}, $\mathscr{C}_{+}^{i_0}(p,q)\cup\{0\}$ is locally compact. Since the semigroup $\mathscr{C}_{+}^{i_0}(p,q)\cup\{0\}$ is isomorphic to the additive semigroup of non-negative integers with adjoined zero $(\omega, +)^0$, by Proposition~\ref{proposition-4} the semigroup $\mathscr{C}_{+}^{i_0}(p,q)\cup\{0\}$ is a compact subsemigroup of $(\mathscr{C}_{+}(p,q)^0,\tau_{\mathrm{lc}})$.
\end{proof}

\begin{lemma}\label{lemma-8}
 $\mathscr{C}_{+}^{i}(p,q)\cup\{0\}$ is a compact subset of $(\mathscr{C}_{+}(p,q)^0,\tau_{\mathrm{lc}})$ for any $i\in\omega$.
\end{lemma}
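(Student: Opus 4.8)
The plan is to invoke Lemma~\ref{lemma-7} to fix one index $i_0\in\omega$ for which $\mathscr{C}_{+}^{i_0}(p,q)\cup\{0\}$ is compact, and then to transport this compactness to every other index through suitable continuous one-sided shifts, using repeatedly that a continuous image of a compact set is compact and that a finite union of compact sets is compact.

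The computations I would carry out first, directly from the multiplication table of $\mathscr{C}_{+}(p,q)$, are the following. For $e_i=b^ia^i$ with $i>i_0$ one has $e_i\cdot b^{i_0}a^{i_0+s}=b^{i}a^{i+s}$, so the left shift $\lambda_{e_i}$ maps $\mathscr{C}_{+}^{i_0}(p,q)$ \emph{onto} $\mathscr{C}_{+}^{i}(p,q)$, preserving the parameter $s$, and fixes the zero. On the other hand, for $k\in\omega$ with $k\le i_0$ the element $a^k=b^0a^k$ lies in $\mathscr{C}_{+}(p,q)$ and $a^k\cdot b^{i_0}a^{i_0+s}=b^{i_0-k}a^{(i_0-k)+(s+k)}$, so the left shift $\lambda_{a^k}$ maps $\mathscr{C}_{+}^{i_0}(p,q)$ injectively \emph{into} $\mathscr{C}_{+}^{i_0-k}(p,q)$, with image exactly $\{b^{i_0-k}a^{(i_0-k)+t}\colon t\ge k\}$, and again fixes the zero.

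For an index $i>i_0$ I would then argue that, by shift-continuity, $\lambda_{e_i}$ is continuous, whence $\mathscr{C}_{+}^{i}(p,q)\cup\{0\}=\lambda_{e_i}\bigl(\mathscr{C}_{+}^{i_0}(p,q)\cup\{0\}\bigr)$ is a continuous image of a compact set and therefore compact; the case $i=i_0$ is Lemma~\ref{lemma-7} itself. The delicate case is $i<i_0$. Putting $k=i_0-i\le i_0$, the continuous shift $\lambda_{a^k}$ sends the compact set $\mathscr{C}_{+}^{i_0}(p,q)\cup\{0\}$ onto $\{b^{i}a^{i+t}\colon t\ge k\}\cup\{0\}$, which is thus compact; but this is only a \emph{cofinite} part of $\mathscr{C}_{+}^{i}(p,q)\cup\{0\}$, the points $b^{i}a^{i+t}$ with $0\le t<k$ being omitted. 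I would finish by writing
\begin{equation*}
\mathscr{C}_{+}^{i}(p,q)\cup\{0\}=\bigl(\{b^{i}a^{i+t}\colon t\ge k\}\cup\{0\}\bigr)\cup\{b^{i}a^{i+t}\colon 0\le t<k\},
\end{equation*}
a union of a compact set with a finite (hence compact) set, which is compact.

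I expect the main obstacle to be exactly this downward step: there is no continuous shift carrying the known compact fibre $\mathscr{C}_{+}^{i_0}(p,q)\cup\{0\}$ \emph{onto} a fibre of strictly smaller index, since every candidate shift drops the first $k$ parameters. The point of the argument is that losing finitely many parameters is harmless, because those missing points are isolated (the subspace $\mathscr{C}_{+}(p,q)$ being discrete in $(\mathscr{C}_{+}(p,q)^0,\tau_{\mathrm{lc}})$) and can be re-adjoined as a finite, hence compact, set.
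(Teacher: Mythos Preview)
Your proposal is correct and follows essentially the same route as the paper: invoke Lemma~\ref{lemma-7} to obtain a compact fibre at some index $i_0$, push it upward via the left shift $\lambda_{b^{i}a^{i}}$ for $i>i_0$, and push it downward via $\lambda_{a^{i_0-i}}$ for $i<i_0$, restoring the finitely many missing points afterwards. The computations and the handling of the cofinite image in the downward case match the paper exactly.
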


\begin{proof}
By Lemma~\ref{lemma-7} there exists $i_0\in\omega$ such that $\mathscr{C}_{+}^{i_0}(p,q)\cup\{0\}$ is a compact subset of $(\mathscr{C}_{+}(p,q)^0,\tau_{\mathrm{lc}})$. We fix an arbitrary $i\in\omega$. The semigroup operation in $\mathscr{C}_{+}(p,q)^0$ implies the following:
\begin{enumerate}
  \item[(1)] if $i<i_0$, then
       \begin{align*}
         a^{i_0-i}\cdot \mathscr{C}_{+}^{i_0}(p,q) &=\left\{a^{i_0-i}\cdot b^{i_0}a^{i_0+s}\colon s\in \omega\right\}= \\
          &=\left\{b^{i_0-(i_0-i)}a^{i_0+s}\colon s\in \omega\right\}= \\
          &=\left\{b^{i}a^{i_0+s}\colon s\in \omega\right\}= \\
          &=\mathscr{C}_{+}^{i}(p,q)\setminus\left\{b^ia^i,\ldots,b^ia^{i_0-1}\right\};
       \end{align*}

  \item[(2)] if $i>i_0$, then
       \begin{align*}
         b^ia^{i}\cdot \mathscr{C}_{+}^{i_0}(p,q)&=\left\{b^ia^{i}\cdot b^{i_0}a^{i_0+s}\colon s\in \omega\right\}= \\
          &=\left\{b^ia^{i}\cdot b^{i_0}a^{i_0}\cdot a^{s}\colon s\in \omega\right\}= \\
          &=\left\{b^ia^{i}\cdot a^{s}\colon s\in \omega\right\}= \\
          &=\left\{b^ia^{i+s}\colon s\in \omega\right\}= \\
          &=\mathscr{C}_{+}^{i}(p,q).
       \end{align*}
\end{enumerate}
Hence, if $i>i_0$, then $\mathscr{C}_{+}^{i}(p,q)\cup\{0\}$ is a compact subset of $(\mathscr{C}_{+}(p,q)^0,\tau_{\mathrm{lc}})$ as a continuous image of compact space $\mathscr{C}_{+}^{i_0}(p,q)\cup\{0\}$ under left shift $\lambda_{b^ia^{i}}\colon x\mapsto b^ia^{i}\cdot x$. Similar, in the case when $i<i_0$ we have that $\mathscr{C}_{+}^{i}(p,q)\cup\{0\}$ is compact, because it is the union of the finite family of compact subsets $\left\{a^{i_0-i}\cdot \mathscr{C}_{+}^{i_0}(p,q),\left\{b^ia^i\right\},\ldots,\left\{b^ia^{i_0-1}\right\}\right\}$.
\end{proof}


\begin{example}\label{example-9}
Let $\left\{x_i\right\}_{i\in\omega}$ be any non-decreasing sequence of non-negative integers. We define the topology $\tau_{\{x_i\}}$ on the semigroup $\mathscr{C}_{+}(p,q)^0$ in the following way. Put
\begin{equation*}
  U_{\{x_i\}}^n(0)=\left\{0\right\}\cup\left\{b^ka^{k+x_k+s}\colon k,s\in\omega \quad \hbox{and} \quad k+x_k+s>n\right\}.
\end{equation*}
We suppose that all points of the set $\mathscr{C}_{+}(p,q)$ are isolated in $(\mathscr{C}_{+}(p,q)^0,\tau_{\{x_i\}})$, and the family $\mathscr{B}_{\{x_i\}}(0)=\left\{U_{\{x_i\}}^n(0)\colon n\in\omega\right\}$ is the base of the topology $\tau_{\{x_i\}}$ at zero $0$ of the semigroup $\mathscr{C}_{+}(p,q)^0$.

It is obvious that the space  $(\mathscr{C}_{+}(p,q)^0,\tau_{\{x_i\}})$ is  Hausdorff and locally compact.

Next we show that the semigroup operation in $(\mathscr{C}_{+}(p,q)^0,\tau_{\{x_i\}})$ is separately continuous.

Suppose $b^ma^{m+x_m+s}\in U_{\{x_i\}}^n(0)$ and $m\leqslant n=i+j\geqslant i$. Then $m+x_m+s>n$, and hencem $i+j+x_m+s\geqslant n$, which implies that $b^ia^{i+j++x_m+s}\in U_{\{x_i\}}^n(0)$.

If $m>n=i+j\geqslant i$, then $b^ia^{i+j}\cdot b^ma^{m+x_m+s}=b^{m-j}a^{m+x_m+s}$. In the case when $m-j\leqslant n$ we have that $m+x_m+s\geqslant n$ and  $b^{m-j}a^{m+x_m+s}\in U_{\{x_i\}}^n(0)$. In the case when $m-j> n$ we have that
\begin{equation*}
m-j+x_{m-j}+s\leqslant m-j+x_{m}+s\leqslant m+x_{m}+s,
\end{equation*}
because $\left\{x_i\right\}_{i\in\omega}$ is a non-decreasing sequence of non-negative integers, and hence $b^{m-j}a^{m+x_m+s}\in U_{\{x_i\}}^n(0)$. Therefore, the inclusion $b^ia^{i+j}\cdot U_{\{x_i\}}^n(0)\subseteq U_{\{x_i\}}^n(0)$ holds for any $n\geqslant i+j$.

If $m\geqslant n$, then $m+x_m+s>n$, and hence, we have that
\begin{equation*}
  b^ma^{m+x_m+s}\cdot b^ia^{i+j}= b^ma^{m+x_m+s-i+i+j}= b^ma^{m+j+x_m+s}.
\end{equation*}
This implies that for any $n\geqslant i+j$ the following inclusion $ U_{\{x_i\}}^n(0)\cdot  b^ia^{i+j}\subseteq U_{\{x_i\}}^n(0)$ holds.

Therefore, the semigroup operation in $(\mathscr{C}_{+}(p,q)^0,\tau_{\{x_i\}})$ is separately continuous.
\end{example}

Since there exist continuum many non-decreasing sequence of non-negative integers in $\omega$, Lemma~\ref{lemma-8} and Example~\ref{example-9} imply the main theorem of this paper.

\begin{theorem}
On the semigroup $\mathscr{C}_{+}(p,q)^0$ $(\mathscr{C}_{-}(p,q)^0)$ there exist continuum many Hausdorff locally compact shift-continuous topologies up to topological isomorphism.
\end{theorem}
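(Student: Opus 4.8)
The plan is to produce the required family of topologies from Example~\ref{example-9} and then to separate them \emph{up to topological isomorphism} by combining a description of the neighbourhood filter at the zero with a purely algebraic computation of the automorphism group. First I would record that Example~\ref{example-9} assigns to every non-decreasing sequence $\{x_i\}_{i\in\omega}$ a Hausdorff locally compact shift-continuous topology $\tau_{\{x_i\}}$ on $\mathscr{C}_{+}(p,q)^0$ in which every point of $\mathscr{C}_{+}(p,q)$ is isolated and $0$ is the unique non-isolated point (local compactness being guaranteed through Lemma~\ref{lemma-8} and Example~\ref{example-9}). Because the space has a single accumulation point, $\tau_{\{x_i\}}$ is completely determined by the family of sets $A\subseteq\mathscr{C}_{+}(p,q)$ with $0\notin\overline{A}$. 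Reading off the basic neighbourhoods $U_{\{x_i\}}^n(0)$, one checks that $0\notin\overline{A}$ exactly when $A\setminus E_{\{x_i\}}$ is finite, where $E_{\{x_i\}}=\{b^{k}a^{k+t}\colon k\in\omega,\ 0\leqslant t<x_k\}$ meets each row $\mathscr{C}_{+}^{k}(p,q)$ in its initial segment of length $x_k$. Hence $\tau_{\{x_i\}}=\tau_{\{y_i\}}$ if and only if $E_{\{x_i\}}\bigtriangleup E_{\{y_i\}}$ is finite, i.e. if and only if $x_k=y_k$ for all but finitely many $k$; since there are continuum many non-decreasing sequences pairwise differing in infinitely many coordinates, the assignment $\{x_i\}\mapsto\tau_{\{x_i\}}$ yields continuum many pairwise distinct topologies.

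The decisive step is to pass from ``pairwise distinct'' to ``pairwise non-isomorphic'', and for this I would prove that the abstract semigroup $\mathscr{C}_{+}(p,q)^0$ admits only the identity automorphism. The idempotents are $0$ and $e_k=b^{k}a^{k}$, and from $e_ke_l=e_{\max\{k,l\}}$ they form, together with $0$, a chain in which $e_0$ is the largest element, each $e_{k+1}$ is the largest idempotent strictly below $e_k$, and $0$ is the least element; such a chain is order-rigid, so it is fixed pointwise by every automorphism. I would then give the algebraic description $\mathscr{C}_{+}^{i}(p,q)=\{x\colon e_ix=x\text{ and }e_{i+1}x\neq x\}$, which forces every automorphism to map each row $\mathscr{C}_{+}^{i}(p,q)$ onto itself. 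As each row is a subsemigroup isomorphic to $(\omega,+)$, whose automorphism group is trivial (its unique generator must be fixed), the restriction of any automorphism to every row is the identity, and thus $\operatorname{Aut}\big(\mathscr{C}_{+}(p,q)^0\big)=\{\mathrm{id}\}$. A topological isomorphism between $(\mathscr{C}_{+}(p,q)^0,\tau_{\{x_i\}})$ and $(\mathscr{C}_{+}(p,q)^0,\tau_{\{y_i\}})$ is a semigroup automorphism that is simultaneously a homeomorphism, hence the identity map, so the two spaces are topologically isomorphic precisely when $\tau_{\{x_i\}}=\tau_{\{y_i\}}$. Together with the first paragraph this produces continuum many pairwise non-isomorphic topologies on $\mathscr{C}_{+}(p,q)^0$.

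Finally, for $\mathscr{C}_{-}(p,q)^0$ I would invoke the anti-isomorphism between $\mathscr{C}_{+}(p,q)$ and $\mathscr{C}_{-}(p,q)$. Separate continuity, the Hausdorff property and local compactness are all preserved when a topology is transported along an anti-isomorphism (which merely interchanges the roles of the left and right shifts), and the idempotent-chain and row arguments are symmetric, so transporting the family $\{\tau_{\{x_i\}}\}$ gives continuum many pairwise non-isomorphic Hausdorff locally compact shift-continuous topologies on $\mathscr{C}_{-}(p,q)^0$ as well. The main obstacle is exactly the isomorphism-invariance of the middle paragraph: the mere existence of the topologies is immediate from Example~\ref{example-9}, but excluding hidden topological isomorphisms requires both the explicit computation $\operatorname{Aut}(\mathscr{C}_{+}(p,q)^0)=\{\mathrm{id}\}$ and the recognition of $E_{\{x_i\}}$ (equivalently, the neighbourhood filter at $0$) as a complete isomorphism invariant.
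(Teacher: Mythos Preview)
Your argument is correct and, in fact, considerably more complete than the paper's own justification. The paper merely records that there are continuum many non-decreasing sequences and asserts that Lemma~\ref{lemma-8} together with Example~\ref{example-9} yields the theorem; it never spells out why sequences differing in infinitely many coordinates give topologies that are non-isomorphic rather than merely distinct. You supply exactly the two ingredients the paper leaves implicit: first, the identification of the neighbourhood filter at $0$ with the set $E_{\{x_i\}}$ modulo finite sets, so that $\tau_{\{x_i\}}=\tau_{\{y_i\}}$ iff $x_k=y_k$ for cofinitely many $k$; second, the computation $\operatorname{Aut}\big(\mathscr{C}_{+}(p,q)^0\big)=\{\mathrm{id}\}$ via the rigidity of the idempotent chain and of each row $\mathscr{C}_{+}^{i}(p,q)\cong(\omega,+)$, which collapses ``topologically isomorphic'' to ``equal as topologies''. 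Both steps are sound as you describe them.

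One small remark: your parenthetical appeal to Lemma~\ref{lemma-8} for local compactness is unnecessary and mildly circular, since Lemma~\ref{lemma-8} already presupposes a locally compact topology. Local compactness of $(\mathscr{C}_{+}(p,q)^0,\tau_{\{x_i\}})$ follows directly, as Example~\ref{example-9} asserts, from the observation that each basic neighbourhood $U_{\{x_i\}}^{n}(0)$ is compact (the difference $U_{\{x_i\}}^{n}(0)\setminus U_{\{x_i\}}^{m}(0)$ is finite for every $m$). This does not affect the validity of your argument.
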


Since for any non-decreasing sequence of non-negative integers $\left\{x_i\right\}_{i\in\omega}$ in $\omega$ and any $n\in\omega$ the set $\mathscr{C}_{+}(p,q)^0\setminus U_{\{x_i\}}^n(0)$ is either finite or infinite, we get the following corollary.

\begin{corollary}
On the semigroup $\mathscr{C}_{+}(p,q)^0$ $(\mathscr{C}_{-}(p,q)^0)$ there exist exactly three Hausdorff locally compact shift-continuous topologies up to homeomorphism.
\end{corollary}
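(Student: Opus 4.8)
The plan is to classify every Hausdorff locally compact shift-continuous topology $\tau$ on $X=\mathscr{C}_{+}(p,q)^0$ up to homeomorphism by analysing the single point $0$. First I would note that, by Hausdorffness, $\{0\}$ is closed, so $\mathscr{C}_{+}(p,q)=X\setminus\{0\}$ is open; since every Hausdorff shift-continuous topology on $\mathscr{C}_{+}(p,q)$ is discrete by \cite[Theorem~6]{Gutik=2023}, every point of $\mathscr{C}_{+}(p,q)$ is isolated in $X$, and hence $0$ is the only point that can fail to be isolated. Thus the homeomorphism type of $(X,\tau)$ depends solely on the neighbourhood filter at $0$, and the problem reduces to a purely point-set question about a countably infinite, locally compact, Hausdorff space with at most one non-isolated point. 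Shift-continuity enters only through this reduction.

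The central step handles the case in which $0$ is not isolated. Here I would first extract a compact-and-open neighbourhood of $0$: local compactness yields a compact (hence closed) neighbourhood $K$ of $0$; the set $K\setminus\operatorname{int}_X(K)$ is a compact subset of the discrete set $K\setminus\{0\}$, hence finite, so $U:=\operatorname{int}_X(K)$ is cofinite in $K$ and, being the complement in $K$ of a finite (open) set, is both compact and open in $X$, with $0$ as its unique non-isolated point. A countably infinite compact Hausdorff space whose only non-isolated point is $0$ is homeomorphic to the one-point compactification $\alpha(\omega)$ of a countable discrete space (a convergent sequence together with its limit): every neighbourhood of $0$ is cofinite, and removing finitely many isolated points from an open neighbourhood preserves openness, so the cofinite sets containing $0$ form a neighbourhood base. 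Consequently $U\cong\alpha(\omega)$, while $X\setminus U$ is clopen and discrete. It follows that $X$ is compact exactly when the neighbourhood filter of $0$ is the cofinite filter, in which case $X\cong\alpha(\omega)$; otherwise $X\setminus U$ is infinite and $X\cong\alpha(\omega)\sqcup\omega$ is non-compact.

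These alternatives leave exactly three possible homeomorphism types: $(1)$ the discrete space, when $0$ is isolated; $(2)$ the convergent sequence $\alpha(\omega)$, when $0$ is non-isolated and $X$ is compact; and $(3)$ the non-compact sum $\alpha(\omega)\sqcup\omega$, when $0$ is non-isolated and $X$ is not compact. To conclude I would check that each type is realised and that the three are pairwise non-homeomorphic. Type $(1)$ is realised by the discrete topology, which is trivially a locally compact Hausdorff shift-continuous topology; type $(2)$ by the topology $\tau_{\{x_i\}}$ of Example~\ref{example-9} with $x_i\equiv 0$, whose basic neighbourhoods of $0$ are cofinite; and type $(3)$ by $\tau_{\{x_i\}}$ with any non-decreasing sequence that is not identically zero (say $x_i\equiv 1$), for which every set $\mathscr{C}_{+}(p,q)^0\setminus U_{\{x_i\}}^n(0)$ is infinite, so the neighbourhoods of $0$ are not cofinite. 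The types are separated by two homeomorphism invariants, the number of non-isolated points ($0$ for $(1)$, one for $(2)$ and $(3)$) and compactness (only $(2)$ is compact), so no two of them coincide.

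I expect the main obstacle to be the ``at most three'' direction, namely treating \emph{all} such topologies uniformly rather than only the explicitly constructed family $\tau_{\{x_i\}}$; the crux is producing the compact-and-open neighbourhood $U$ of $0$ and recognising the local structure at $0$ as a convergent sequence, after which the finite/infinite dichotomy for $X\setminus U$ (the alternative flagged just before the statement) collapses everything into the compact and the non-compact type. Finally, the assertion for $\mathscr{C}_{-}(p,q)^0$ follows by the same argument, since $\mathscr{C}_{-}(p,q)$ is anti-isomorphic to $\mathscr{C}_{+}(p,q)$, so it too is discrete in any Hausdorff shift-continuous topology and the identical point-set analysis applies.
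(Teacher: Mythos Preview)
Your argument is correct and is in fact more complete than what the paper offers. The paper's justification of the corollary is the single sentence preceding it, which only observes that for the explicit family $\tau_{\{x_i\}}$ the complements $\mathscr{C}_{+}(p,q)^0\setminus U_{\{x_i\}}^n(0)$ are either all finite or all infinite; the upper bound over \emph{all} Hausdorff locally compact shift-continuous topologies is left to Lemmas~\ref{lemma-5}--\ref{lemma-8}, which describe the neighbourhood structure of $0$ through the row decomposition $\mathscr{C}_{+}^i(p,q)$ and shift arguments.

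Your route is genuinely different: you bypass the semigroup-specific Lemmas~\ref{lemma-5}--\ref{lemma-8} entirely and reduce the problem to a purely point-set classification of countable locally compact Hausdorff spaces with at most one non-isolated point, using only the discreteness of $\mathscr{C}_{+}(p,q)$ from \cite[Theorem~6]{Gutik=2023}. This is more elementary and more general---the same argument would classify, up to homeomorphism, Hausdorff locally compact shift-continuous topologies on $S^0$ for any countable semigroup $S$ that is discrete in every such topology. The paper's row-by-row analysis, by contrast, is tailored to the semigroup structure and is really aimed at the finer classification up to topological isomorphism in the main theorem; the homeomorphism corollary then drops out as a coarse byproduct. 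Your direct extraction of a compact-and-open neighbourhood $U\cong\alpha(\omega)$ and the finite/infinite dichotomy for $X\setminus U$ make the ``at most three'' direction explicit where the paper leaves it implicit.
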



\begin{thebibliography}{W}

\bibitem{Bardyla=2016}
S. Bardyla,
\emph{Classifying locally compact semitopological polycyclic monoids},
Mat. Visn. Nauk. Tov. Im. Shevchenka \textbf{13} (2016), 21--28.

\bibitem{Bardyla=2018}
S. Bardyla,
\emph{On locally compact semitopological graph inverse semigroups},
Mat. Stud. \textbf{49} (2018), no. 1, 19--28.
DOI: 10.15330/ms.49.1.19-28

\bibitem{Bardyla=2023}
S. Bardyla,
\emph{On topological McAlister semigroups},
J. Pure Appl. Algebra \textbf{227} (2023), no.~4, 107274.
DOI: 10.1016/j.jpaa.2022.107274

\bibitem{Carruth-Hildebrant-Koch=1983}
J.~H.~Carruth, J.~A.~Hildebrant, and  R.~J.~Koch,
\emph{The theory of topological semigroups}, Vol. I, Marcel
Dekker, Inc., New York and Basel, 1983.


\bibitem{Clifford-Preston=1961}
A.~H.~Clifford and G.~B.~Preston,
\emph{The algebraic theory of semigroups}, Vol. I,
Amer. Math. Soc. Surveys {\bf 7}, Providence, R.I.,  1961.



\bibitem{Engelking=1989}
R.~Engelking,
\emph{General topology}, 2nd ed., Heldermann, Berlin, 1989.

\bibitem{Gutik=2015}
O. Gutik,
\emph{On the dichotomy of a locally compact semitopological bicyclic monoid with adjoined zero},
Visnyk L'viv Univ., Ser. Mech.-Math. \textbf{80} (2015), 33--41.

\bibitem{Gutik=2023}
O. Gutik,
\emph{On non-topologizable semigroups},
Preprint  (arXiv: 2405.16992).

\bibitem{Gutik-Khylynskyi=2022}
O. Gutik and P. Khylynskyi,
\emph{On a locally compact submonoid of the monoid cofinite partial isometries of $\mathbb{N}$ with adjoined zero,}
Topol. Algebra Appl. \textbf{10} (2022), no. 1, 233--245.
DOI: 10.1515/taa-2022-0130


\bibitem{Gutik-KhylynskyiM=2024}
O. V. Gutik, M. B. Khylynskyi,
\emph{On locally compact shift continuous topologies on the semigroup $\boldsymbol{B}_{[0,\infty)}$ with an adjoined  compact ideal},
Mat. Stud. \textbf{61} (2024), no. 1, 10--20.
DOI: 10.30970/ms.61.1.10-20


\bibitem{Gutik-Maksymyk=2019}
O. V. Gutik and K. M. Maksymyk,
\emph{On a semitopological extended bicyclic semigroup with adjoined zero},
Mat. Metody Fiz.-Mekh. Polya \textbf{62} (2019), no. 4, 28--38.
\textbf{Reprinted version:}
O. V. Gutik and K. M. Maksymyk,
\emph{On a semitopological extended bicyclic semigroup with adjoined zero},
J. Math. Sci. \textbf{265} (2022), no. 3, 369--381.
DOI: 10.1007/s10958-022-06058-6

\bibitem{Gutik-Mykhalenych=2023}
O. Gutik, and M. Mykhalenych,
\emph{On a semitopological  semigroup $\boldsymbol{B}_{\omega}^{\mathscr{F}}$  when a family $\mathscr{F}$ consists of inductive non-empty subsets of~$\omega$}, Mat. Stud. \textbf{59} (2023), no. 1, 20--28. DOI: 10.30970/ms.59.1.20-28

\bibitem{Gutik-Pagon-Pavlyk=2011}
O. Gutik, D. Pagon, and K. Pavlyk,
\emph{Congruences on bicyclic extensions of a linearly ordered group},
Acta Comment. Univ. Tartu. Math. \textbf{15} (2011), no.~2, 61--80.
DOI: 10.12697/ACUTM.2011.15.10

\bibitem{Haworth-McCoy=1977}
R. C. Haworth and R. A. McCoy, \emph{Baire spaces}, Dissertationes
Math., Warszawa, PWN,  1977. Vol.~\textbf{141}.

\bibitem{Makanjuola-Umar=1997}
S. O. Makanjuola and A. Umar,
\emph{On a certain sub semigroup of the bicyclic semigroup},
Communications in Algebra, \textbf{25} (1997), no. 2, 509-519.
DOI: 10.1080/00927879708825870

\bibitem{Maksymyk=2019}
K. Maksymyk,
\emph{On locally compact groups with zero},
Visn. Lviv Univ., Ser. Mekh.-Mat. \textbf{88} (2019), 51--58.
(in Ukrainian).

\bibitem{Mokrytskyi=2019}
T. Mokrytskyi,
\emph{On the dichotomy of a locally compact semitopological monoid of order isomorphisms between principal filters of $\mathbb{N}^n$ with adjoined zero},
Visn. Lviv Univ., Ser. Mekh.-Mat. \textbf{87} (2019), 37--45.

\bibitem{Ruppert=1984}
W.~Ruppert,
\emph{Compact semitopological semigroups: an intrinsic theory},
Lect. Notes Math., \textbf{1079}, Springer, Berlin, 1984.
DOI: 10.1007/BFb0073675



\end{thebibliography}
\end{document}